\begin{document}
\title{Optimal Investment in the Development of Oil and Gas Field\thanks{The research
is carried out within the framework of the state contract of the
Sobolev Institute of Mathematics (project 0314--2019--0014).}}
\author{Adil Erzin\inst{1,2}\orcidID{0000-0002-2183-523X} \and
Roman Plotnikov\inst{1}\orcidID{0000-0003-2038-5609} \and Alexei
Korobkin\inst{3} \and Gregory Melidi\inst{2} \and Stepan
Nazarenko\inst{2}}
\authorrunning{A. Erzin et al.}
\institute{Sobolev Institute of Mathematics, SB RAS, Novosibirsk 630090, Russia \and
Novosibirsk State University, Novosibirsk 630090, Russia \and Gazpromneft, St. Petersburg, Russia\\
\email adilerzin@math.nsc.ru}
\maketitle              
\begin{abstract}
Let an oil and gas field consists of clusters in each of which an
investor can launch at most one project. During the implementation
of a particular project, all characteristics are known, including
annual production volumes, necessary investment volumes, and profit.
The total amount of investments that the investor spends on
developing the field during the entire planning period we know. It
is required to determine which projects to implement in each cluster
so that, within the total amount of investments, the profit for the
entire planning period is maximum.

The problem under consideration is NP-hard. However, it is solved by
dynamic programming with pseudopolynomial time complexity.
Nevertheless, in practice, there are additional constraints that do
not allow solving the problem with acceptable accuracy at a
reasonable time. Such restrictions, in particular, are annual
production volumes. In this paper, we considered only the upper
constraints that are dictated by the pipeline capacity. For the
investment optimization problem with such additional restrictions,
we obtain qualitative results, propose an approximate algorithm, and
investigate its properties. Based on the results of a numerical
experiment, we conclude that the developed algorithm builds a
solution close (in terms of the objective function) to the optimal
one. \keywords{Investment portfolio optimization \and Production
limits.}
\end{abstract}
\section{Introduction}
The founder of the mathematical theory of portfolio optimization is
G. Markowitz, who, in 1952, published an article \cite{Markowitz52}
with the basic definitions and approaches for evaluating investment
activity. He developed a methodology for the formation of an
investment portfolio, aimed at the optimal choice of assets, based
on a given ratio of profitability/risk. The ideas formulated by him
form the basis of modern portfolio theory
\cite{Markowitz52,Markowitz59}.

The author of \cite{Pliska86} gave a review of portfolio selection
methods and described the prospects of some open areas. At first,
the author described the classical Markowitz model. Then comes the
``intertemporal portfolio choice'' developed by Merton
\cite{Merton69,Merton71}, the fundamental concept of dynamic hedging
and martingale methods. Pliska \cite{Pliska86}, Karatzas
\cite{Karatzas87}, as well as Cox and Huang \cite{Cox86} made the
main contribution to the development of this direction. The authors
of \cite{Detemple03} and \cite{Ocone91} proposed the formulas for
the optimal portfolio for some private productions. These formulas
have the form of conditional expectation from random variables.

In most well-known studies, the problem of optimal investment is
solved numerically \cite{Brandt05,Brennan97}, which does not allow
us to identify the contribution of portfolio components to the
optimal solution. In \cite{Brandt04}, a new approach is proposed for
dynamic portfolio selection, which is not more complicated than the
Markowitz model. The idea is to expand the asset space by including
simple, manageable portfolios and calculate the optimal static
portfolio in this extended space. It is intuitively assumed that a
static choice among managed portfolios is equivalent to a dynamic
strategy.

If we consider investing in specific production projects, then each
of them is either implemented or not. In contrast to the classical
Markowitz's problem, a discrete statement, arises, and the
mathematical apparatus developed for the continuous case is not
applicable. In \cite{Malah16}, the authors examined a two-criterion
problem of maximizing profit and minimizing risk. The
characteristics of each project, mutual influence, and the capital
available to the investor are known. For the Boolean formulation of
the problem, the authors proved NP-hardness and found special cases
when the problem is solved with pseudopolynomial time complexity.

The portfolio optimization problems described above relate to the
stock market. For companies operating in the oil and gas sector,
optimization problems are relevant. In these problems it is
necessary to maximize total profit and minimize risks for a given
period, taking into account additional restrictions, for example, on
production volume, as well as problems in which it is necessary to
maximize production (or profit) for a given amount of funding. In
\cite{Akopov04}, the author presented an approach aimed at improving
the efficiency of the management of the oil and gas production
association. Two control loops are distinguished: macroeconomic,
which is responsible for optimizing policies at the aggregated level
(industry and regional), and microeconomic, which is responsible for
optimizing the organizational and functional structure of the
company. The first circuit implemented using the author developed
computable models of general economic equilibrium and integrated
matrices of financial flows. The second circuit performed using an
approach based on simulation of the business processes of an
enterprise.

The author of \cite{Dominikov17} considers the problem of forming a
portfolio of investment projects, which required to obtain maximum
income under given assumptions regarding risks. A method is proposed
based on a comprehensive multidimensional analysis of an investment
project. The authors of \cite{Goncharenko08,Konovalov06} consider a
problem of minimizing the deposit costs with restrictions on the
volume of the production. They propose an algorithm for building an
approximate solution by dynamic programming. The accuracy of the
algorithm depends on the discretization step of the investment
volume. The authors of \cite{Goncharenko08} formulate the problem of
minimizing various costs associated with servicing wells, with
limitations related to the amount of oil produced, as a linear
programming problem, and find the optimal solution using the simplex
method.

For the decision-maker, the main concern is how to allocate limited
resources to the most profitable projects. Recently, a new
management philosophy, Beyond NPV (Net Present Value), has attracted more and more
international attention. Improved portfolio optimization model
presented in \cite{Qing14}. It is an original method, in addition to
NPV, for budgeting investments. In the proposed model, oil company
executives can compromise between profitability and risk concerning
their acceptable level of risk. They can also use the ``operating
bonus'' to distinguish their ability to improve the performance of
major projects. To compare optimized utility with non-optimized
utility, the article conducted a simulation study based on 19
foreign upstream assets owned by a large oil company in China. The
simulation results showed that the optimization model, including the
``operating bonus'', is more in line with the rational demand of
investors.

The purpose of the paper \cite{Bulai18} is to offer a tool that
might support the strategic decision-making process for companies
operating in the oil industry. Their model uses Markowitz's
portfolio selection theory to construct an efficient frontier for
currently producing fields and a set of investment projects. These
relate to oil and gas exploration projects and projects aimed at
enhancing current production. The net present value obtained for
each project under a set of user-supplied scenarios. For the
base-case scenario, the authors also model oil prices through Monte
Carlo simulation. They run the model for a combination of portfolio
items, which include both currently producing assets and new
exploration projects, using data characteristics of a mature region
with a high number of low-production fields. The objective is to
find the vector of weights (equity stake in each project), which
minimizes portfolio risk, given a set of expected portfolio returns.

Due to the suddenness, uncertainty, and colossal loss of political
risks in overseas projects, the paper \cite{Huang19} considers the
time dimension and the success rate of project exploitation for the
goal of optimizing the allocation of multiple objectives, such as
output, investment, efficiency, and risk. A linear portfolio risk
decision model proposed for multiple indicators, such as the
uncertainty of project survey results, the inconsistency of project
investment time, and the number of projects in unstable political
regions. Numerical examples and the results test the model and show
that the model can effectively maximize the portfolio income within
the risk tolerance range under the premise of ensuring the rational
allocation of resources.

This paper discusses the problem of optimal investment of oil and
gas field development consisting of subfields -- \emph{clusters}.
For each cluster, there are several possibilities for its development,
which we call the \emph{projects}. Each project characterized by
cost, lead time, resource intensity, annual production volumes, and
profit from its implementation. Also, there are restrictions on the
annual production volumes of the entire field. This requirement
leads to the need for a later launch of some projects so that the
annual production volume does not exceed the allowable volumes.
Assuming that a project launched later is another project, we
proposed a statement of the problem in the form of a Boolean linear
programming (BLP) problem. We estimated the maximum dimension at
which CPLEX solves the BLP in a reasonable time. For a
large-dimensional problem, we developed a method that constructs an
approximate solution in two stages. At the first stage, the problem
is solved without limitation on the volume of annual production.
This problem remains NP-hard, but it is solvable by a
pseudopolynomial dynamic programming algorithm. As a result, one
project is selected for each cluster. The project is characterized,
in particular, by the year of launch and production volumes in each
subsequent year. If we start the project later, the annual
production volumes shift. In the second stage, the problem of
determining the start moments of the projects selected at the first
stage is solved, taking into account the restrictions on annual
production volumes, and the profit is maximal. We developed a local
search algorithm for partial enumeration of permutations of the
order in which projects are launched. At the same time, for each
permutation, the algorithm of tight packing of production profiles
developed by us (we call it a greedy algorithm), which builds a
feasible solution, is applied. A numerical experiment compared our
method and CPLEX.

The rest of the paper has the following organization. In Section 2,
we state the problem as a BLP. In Section 3, the problem without
restrictions on the volume of production reduced to a nonlinear
distribution problem, which is solved by dynamic programming. As a
result, a ``best'' project found for each cluster. Section 4
describes the method for constructing an approximate solution by
searching for the start times for the ``best'' projects. The next
section presents the results of a numerical experiment. We identify
the maximum dimension of the problem, which is solved by the CPLEX
package in a reasonable time, and compare the accuracy of the
developed approximate algorithms. Section 6 contains the main
conclusions and describes the directions for further research.

\section{Formulation of the problem}
For the mathematical formulation of the problem, we introduce the
following notation for the parameters:
\begin{itemize}
  \item $[1,T]$ is the planning period;
  \item $C$ is the total amount of investment;
  \item $K$ is the set of clusters ($|K|=n$);
  \item $P_k$ is the set of projects for the development of the cluster $k\in K$ ($\max\limits_k|P_k|=p$)
  taking into account the shift at the beginning of each project;
  \item $d_k^i(t)$ is the volume of production in the cluster $k\in K$ per year $t=1,\ldots,T$,
  if the project $i\in P_k$ is implemented there;
  \item $q_k^i$ is the profit for the entire planning period from the implementation of project $i$ in cluster $k$;
  \item $c_k^i$ is the cost of implementing project $i$ in cluster $k$;
  \item $D(t)$ is the maximum allowable production per year $t$;
\end{itemize}
and for the variables:
$$
x_k^i=\left\{
        \begin{array}{ll}
          1, & \hbox{if project $i$ is selected for cluster $k$;} \\
          0, & \hbox{else.}
        \end{array}
      \right.
$$
Then the problem under consideration can be written as follows.
\begin{equation}\label{e1}
  \sum\limits_{k\in K}\sum\limits_{i\in P_k}q_k^ix_k^i\rightarrow\max\limits_{x_k^i\in\{0,1\}};
\end{equation}
\begin{equation}\label{e2}
  \sum\limits_{k\in K}\sum\limits_{i\in P_k}c_k^ix_k^i\leq C;
\end{equation}
\begin{equation}\label{e3}
  \sum\limits_{i\in P_k}x_k^i\leq 1,\ k\in K;
\end{equation}
\begin{equation}\label{e4}
  \sum\limits_{k\in K}\sum\limits_{i\in P_k}d_k^i(t)x_k^i\leq D(t),\ t\in [1,T].
\end{equation}

\begin{remark}
Each project has various parameters, among which the annual
production volumes. If we start the project later, then the graphic
of annual production will shift entirely. Suppose $d_k^i(t)$ is the
volume production per year $t$ if the project $i$ is implemented in
the cluster $k$. If this project is launched $\tau$ years later, the
annual production during year $t$ will be $d_k^i(t-\tau)$. So, each
project in the set $P_k$ is characterized, in particular, by its
beginning.

However, not all characteristics retain their values at a later
launch of the project. Profit from project implementation depends on
the year of its launch, as money depreciates over the years. One way
to account for depreciation is to use a discount factor. The value
of money decreases with each year by multiplying by a discount
factor that is less than 1. In this regard, at the stage of
preliminary calculations, we recount values associated with
investment and profit.

As a result, the set $P_k$ consists of the initial projects, and the
shifted projects for different years as well. So, having solved the
problem (\ref{e1})--(\ref{e4}), we will choose for each cluster not
only the best project but also the time of its start.
\end{remark}

Problem (\ref{e1})--(\ref{e4}) is an NP-hard BLP. For the dimension
which we define in Section 5, a software package, for example,
CPLEX, can be used to solve it. In order to solve the problem of a
large dimension, it is advisable to develop an approximate
algorithm. To do this, in the next section we consider the problem
(\ref{e1})--(\ref{e3}).

\section{The problem without restrictions on production volumes} \label{sDP}
If there are no restrictions on production volumes, then instead of
the variables $x_k^i$, we can use the variables $c_k$, which are
equal to the amount of the investment allocated for the development
of the cluster $k$. To do this, for each cluster $k$, we introduce a
new profit function $q_k(c_k)$, which does not depends on the
selected project but depends on the amount of investment. For each
$k$, the function $q_k(c_k)$ is obviously non-decreasing piecewise
constant. Moreover, if we know the value $c_k$, then one project is
uniquely will be used to develop the cluster $k$, and all its
characteristics will be known. Indeed, the more money is required to
implement the project, the more efficient it is (more profitable).
If this is not a case, then a less effective but more expensive
project can be excluded. Obviously, the values of all functions
$q_k(c_k)$, $c_k\in [0,C]$, $k\in K$, are not difficult to calculate
in advance. The complexity of this procedure does not exceed
$O(KpC)$.

Given the previous, we state the problem of maximizing profit
without restrictions on production volumes, assuming that all
projects start without delay,  in the following form.
\begin{equation}\label{e5}
  \sum\limits_{k\in K}q_k(c_k)\rightarrow\max\limits_{c_k\in [0,C]};
\end{equation}
\begin{equation}\label{e6}
  \sum\limits_{k\in K}c_k\leq C.
\end{equation}

Although problem (\ref{e5})--(\ref{e6}) become easier than the
problem (\ref{e1})--(\ref{e4}), it remains NP-hard. However, it is a
distribution problem, for the solution of which we apply the dynamic
programming method, the complexity of which is $O(n(C/\delta)^2)$,
where $\delta$ is the step of changing the variable $c_k$. Solving
the problem (\ref{e5})--(\ref{e6}), we choose the ``best'' project
for each cluster. If it turns out that at the same time, all the
restrictions (\ref{e4}) fulfilled, then this solution is
\emph{optimal} for the original problem (\ref{e1})--(\ref{e4}). If
at least one inequality (\ref{e4}) violated, then we will construct
a feasible solution in the manner described in the next section.

\section{Consideration of restrictions on production volumes}
We will not change the projects selected for each cluster as a
result of  solving the problem (\ref{e5})--(\ref{e6}). We will try
to determine the moments of launching these projects so that
inequalities (\ref{e4}) fulfill, and profit takes maximal value. The
project selected for the cluster $k$ is characterized by the
production volumes $d_k(t)$ in each year $t\in [1,T]$. It is
necessary to shift the beginning of some projects to a later time so
that in each year $t\in [1,T]$ the total production is at most
$D(t)$:
$$
\sum\limits_{k\in K} d_k(t)\leq D(t).
$$

Assume that the cluster $k$ development project, whose beginning is
shifted by $i\in [0,t_k]$ years, is another project. Then for each
cluster, there is a set of projects, which we denote as before by
$P_k$ ($|P_k|=t_k+1$). As a result, to determine the shift in the
start of the project launch for each cluster, it is enough to solve
the problem (\ref{e1})--(\ref{e4}) without restriction (\ref{e2}),
in which the Boolean variable $x_k^i = 1$ if and only if the start
of the cluster project $k$ is shifted by $i$ years. Then the
solution to the small-dimension problem can be found using a CPLEX.
However, for a large-sized problem, it is necessary to use an
approximate algorithm.

\subsection{Greedy algorithm}
Suppose we order the projects according to the years of their
launch. A shift in the start of projects changes this order. The
order in which projects start uniquely determined by the permutation
$\pi$ of the cluster numbers $\{1,2,\ldots,n\}$. For a given
permutation, we describe informally a greedy algorithm that
constructs a feasible solution to the problem.

Denote by $P(\pi)$ the list of ordered projects. The first project
starts  without delay (with zero shift). We exclude it from the set
$P(\pi)$. For the first project of the updated set $P(\pi)$, we
determine its \emph{earliest} start time, which is no less than the
start time of a previous project, to comply with the production
order and restrictions (\ref{e4}) in each year and exclude this
project from the set $P(\pi)$. We continue the process until the
start year of the last project, $\pi(n)$ is found.

The greedy algorithm will construct a feasible solution for the
given  permutation $\pi$, if it exists, with the time complexity of
$O(nT)$. In the oil and gas industry, profiles (graphs) of annual
production volumes have a log-normal distribution \cite{Power92},
which is characterized by a rapid increase, and then a slight
decrease. This observation and the following lemma, to some extent,
justify why we use the greedy algorithm.

\begin{proposition}
If the order of launching the projects is known, the annual
production schedules for all projects are not-increasing, and
$D(t)=D=const$, $t\in [1, T]$, then the greedy algorithm determines
the optimal start years for all projects.
\end{proposition}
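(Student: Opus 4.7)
My plan is to show that the greedy start years are componentwise no larger than those of any feasible schedule consistent with the fixed order $\pi$, and then to invoke the depreciation-based monotonicity of profit in the launch year to conclude. The first ingredient is a feasibility-monotonicity lemma. Writing $R(t) = D - \sum_{j<i,\ s_j\le t} d_{\pi(j)}(t-s_j+1)$ for the residual capacity at year $t$ after the first $i-1$ projects have been scheduled, the two hypotheses (namely $D(t)\equiv D$, and each $d_{\pi(j)}$ non-increasing) imply that, once every already-scheduled project has begun, i.e., for $t \ge s_{i-1}$, the function $R(t)$ is non-decreasing in $t$, because every summand can only shrink as $t$ grows. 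Combined with the non-increasing shape of $d_{\pi(i)}$, the entire family of per-year capacity constraints for launching project $\pi(i)$ at a candidate year $s \ge s_{i-1}$ collapses to the single peak inequality $d_{\pi(i)}(1) \le R(s)$; and since $R(s)$ is itself monotone in $s$, this shows that if $s$ is a feasible start for project $\pi(i)$, then so is any $s' > s$.

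With this lemma in hand, I would argue by induction on $i$ that $s_i^G \le s_i^*$ for every $i$, where $s^G$ is the greedy schedule and $s^*$ is any feasible schedule respecting $\pi$. The base case $s_1^G = 1 \le s_1^*$ is immediate. For the inductive step, the hypothesis $s_j^G \le s_j^*$ for $j<i$, combined with the non-increasing shape of the profiles, yields $d_{\pi(j)}(t - s_j^G + 1) \le d_{\pi(j)}(t - s_j^* + 1)$ at every $t$ where both starts have occurred; hence the greedy residual dominates the optimal one, $R^G(t) \ge R^*(t)$ for all $t$. Therefore $s_i^*$, which is feasible against $R^*$ and satisfies $s_i^* \ge s_{i-1}^* \ge s_{i-1}^G$, is also feasible against $R^G$, and since greedy takes the smallest such value we get $s_i^G \le s_i^*$.

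Finally, recalling from the Remark after (\ref{e4}) that each shifted-start version of a project carries its own discounted profit, so that the effective per-project payoff $q_{\pi(i)}(s)$ is non-increasing in the launch year $s$, summing the inequalities $q_{\pi(i)}(s_i^G) \ge q_{\pi(i)}(s_i^*)$ over $i$ yields that greedy's total profit is at least that of any feasible competitor, which is the claim. The step I expect to require the most care is the collapse of the full per-year constraint system to the single peak inequality $d_{\pi(i)}(1) \le R(s)$: this is precisely where both assumptions of the proposition (non-increasing profiles \emph{and} constant $D$) are simultaneously essential, and without either of them the feasibility-monotone-in-$s$ property that drives the induction would fail.
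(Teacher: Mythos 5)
Your proof is correct, and it is organized differently from the paper's. The paper argues by exchange and contradiction: take an optimal schedule, locate the first project that could start earlier, slide it left as far as feasibility allows, and observe that only the peak (first-year) production $d_k^1$ needs to be checked because the profile is non-increasing; the shifted solution is no worse (strictly better under discounting), contradicting the assumption that every optimal solution delays some project past its greedy start. You instead run a ``greedy stays ahead'' induction: you first prove a feasibility-monotonicity lemma (constant $D$ plus non-increasing profiles make the residual capacity $R(t)$ non-decreasing once all earlier projects have launched, so the whole constraint system collapses to the single peak inequality $d_{\pi(i)}(1)\le R(s)$ and later starts remain feasible), then show $s_i^G\le s_i^*$ componentwise against any feasible competitor, and finally invoke monotonicity of discounted profit in the launch year. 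Both arguments hinge on the same core observation --- only the peak constraint matters --- but yours makes explicit the pieces the paper leaves implicit (why repeated left-shifts terminate at exactly the greedy schedule, and why earlier is never worse in profit), at the cost of more machinery. One small wording caution: the domination $R^G(t)\ge R^*(t)$ holds only for $t\ge s_{i-1}^*$, not literally ``for all $t$'' (for $s_j^G\le t<s_j^*$ the greedy schedule has already consumed capacity that the competitor has not); since you only apply it at $t=s_i^*\ge s_{i-1}^*$, where your collapse lemma reduces everything to the peak constraint, the argument is unaffected, but the statement should be restricted accordingly.
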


\begin{proof}
In the problem under consideration, time is discrete (measured in
years).  Therefore, the value of production in each cluster is a
certain real number that does not change for one year. A greedy
algorithm for a given order of projects determines the earliest
start time for each project, which is not less than the start time
of the previous project. Suppose that in all optimal solutions,
there is at least one project that begins later than the year
determined by the greedy algorithm. Consider some optimal solution
and let $k$ be the first project that we can start earlier (Fig.
~\ref{fig1}\emph{a}). Since the project $k$ can start earlier, then
move it as much as possible to the left to maintain validity (Fig.
~\ref{fig1}\emph{b}). Notice that it is enough to check the value of
production $d_k^1$ only in the first year of the project $k$ because
it is not less than production in subsequent years ($d_k^t\leq
d_k^1,\ t>1$). The solution obtained after shifting the project $k$
to the left is no worse (and taking into account the discount
coefficient, even better), but the project $k$ starts earlier, which
contradicts the assumption. The proof is over.
\end{proof}
\begin{figure}
\centering
\includegraphics[bb= 0 0 500 250, clip, scale=0.4]{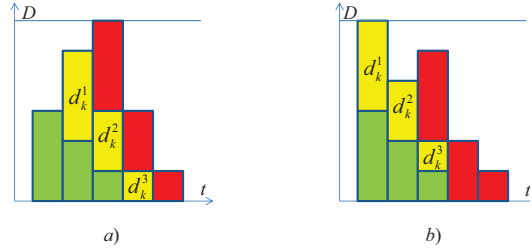}
\caption{Illustration to the Lemma 1 proof (Bars of the same
color belong to the same project, and the height of the bar is
the volume of production in the corresponding year). $a$) Project
$k$ (yellow) may start earlier; $b$) After shifting the project $k$
to the left.} \label{fig1}
\end{figure}

So, with a particular order of projects, a greedy algorithm builds a
solution close to optimal with $O(nT)$ time complexity. A complete
enumeration of permutations requires $O(n!)$ operations. However, it
is reasonable to develop a local search algorithm in which, at each
step, the best permutation is searched in the vicinity of the
current permutation. In order to obtain a solution for a given
permutation, a greedy algorithm is used. The higher the profit in a
particular order of projects, the better the permutation. In the
next subsection, we develop a local search algorithm.

\subsection{Local search}
Using the greedy algorithm described in the previous subsection, one
can construct a  solution for each permutation of the cluster
numbers. Therefore, it is essential to find a permutation where the
solution constructed in such a way is near-optimal. For this reason,
we suggest a local search procedure for permutations starting from
some promising one.

In order to obtain the first permutation for the local search procedure we perform the following greedy algorithm. At the first step we choose such cluster that yields maximum value of income if its development is started at the first year. The number of this cluster becomes the first value of the permutation. Then, at each step of the algorithm, we choose among the unprocessed clusters such cluster, that if its development is started at the earliest year (taking into account the per-year production bound and already chosen clusters), then the total income increment will be maximum. After such cluster is found, we assign the corresponding shift for it and set its number to the next permutation value. The permutation obtained by the described greedy procedure becomes the first permutation of the local search algorithm.

For each permutation in the local search procedure, we construct a
solution using the greedy algorithm, that is described in previous subsection, with time complexity $O(nT)$. As
a movement operation of the local search procedure, we perform the
best possible exchange of two different elements of a permutation.
Then the cardinality of the neighborhood of the current permutation
is $O(n^2)$, and the time complexity of the searching the best
solution is this neighborhood is $O(n^3T)$.

\section{Simulation}
The proposed algorithms have been implemented in the C++ programming
language and launched on the randomly generated test instances. We
also used the IBM ILOG CPLEX package (version 12.10) in order to
obtain optimal or near-optimal solutions together with guaranteed
upper bounds for the comparison. The numerical experiment performed
on an Intel Core i5-4460 (3.2GHz) 8Gb machine.

For the generation of the test data, we supposed that the
distribution function of production volumes by the planned period is
log-normal \cite{Power92}. The parameters of the distribution
density $\mu$ and $\sigma$ were chosen randomly with uniform
distribution on the intervals $[1,2]$ and $[1,1.4]$ correspondingly.
We defined the maximum value of the production volume for each
project at random with uniform distribution on the interval
$[30,200]$ (in thousands of tons) and then multiplied each per-year
volume by the corresponding scaling factor.

We assumed that the profit is proportional to the production volume.
For each project, in order to generate the profit per each year, we,
at first, took the random coefficient (the cost of one ton in
millions of rubles) uniformly distributed on the interval $[4,6]$.
This value may vary depending on the differences in the condition of
the production, overhead costs, remoteness of the cluster. After
that, we multiplied the per-year production volumes by this
coefficient. We also added the noise to the generated values
multiplying them by the random values uniformly distributed on the
interval $[0.95,1.05]$.

Like the investments, we generated random values uniformly
distributed on the interval $[250, 1500]$ (in millions of rubles).
We assumed that the obtained amount of money spent in the first year
of the project exploitation. In about 10 percent of the cases, other
investments made in the second year of the project exploitation. The
second investment is taken as a random part of the first investment
from 10 to 50 percent. As the upper bounds of investments and
per-year production volumes, we took the one-third part of the sum
by clusters of the maximum values per project and per year. That is,
$$
C = 1/3 \sum\limits_{k\in K}\max\limits_{i \in P_k}c_k^i,
$$
and
$$
D = 1/3 \sum\limits_{k\in K}\max\limits_{i \in P_k, t \in [1, T]}d_k^i(t), D(t) = D\; \forall t \in [1, T].
$$
For solving the problem (\ref{e5})-(\ref{e6}) we set $\delta$ equal
ten thousands of rubles, because, according to our preliminary
experiments, further decrease of $\delta$ does not improve the
solution significantly.

We generated instances for four different variants of the number of
clusters: $n=$ 10, 25, 50, and 100. For each value of $n$, we
generated four instances with different maximum and a minimum number
of projects per cluster: 1) from 1 to 10; 2) from 10 to 25; 3) from
25 to 50, and 4) from 50 to 100. We launched our algorithm and CPLEX
on each instance. The results presented in Table \ref{t1}. In this
table, CPLEX stands for the results obtained by CPLEX launched on
the problem (\ref{e1})-(\ref{e4}). $CPLEX_{fp}$ stands for the
results obtained by CPLEX launched on the restricted problem with
the fixed project per each cluster found by the dynamic programming
method described in Section \ref{sDP}. Notations \emph{obj},
\emph{ub}, and \emph{gap} stand for, correspondingly, the objective
function of the incumbent, the upper bound of the value of objective
function, and the relative difference between \emph{obj} and
\emph{ub}. \emph{decline} stands for the decline (in percents) of
the objective function value of the incumbent of the problem with
the fixed set of projects concerning the objective function of the
incumbent of the entire problem. The last four columns represent the
results obtained by our algorithm, which is named $A$ in the table.
$r_1$ denotes the ratio $obj(A)/ub(CPLEX)$, and $r_2$ denotes the
ratio $obj(A)/ub(CPLEX_{fp})$. The last column stands for the total
running time of our algorithm. The running time of CPLEX was limited
by 60 seconds for all the cases except the last one of the largest size,
 --- in the last case CPLEX was given for 1 hour. It also should be noted that CPLEX was parallelized
on four threads.

As it follows from the table, in the cases of small and moderate size CPLEX solves the problem rather
precisely  within 60 seconds. In these cases it always constructs a solution on
which the value of the objective function differs from the optimal
one by at most 1 percent. Algorithm $A$ constructs a less accurate
solution. As it is seen at the column $r_1$, in the worst case, the
objective value of the obtained solution differs from the optimal by
13 percent, in the best case --- by 3 percent, and on average, this
difference does not exceed 9 percent. As one can see at the column
$decline$, the choice of the projects obtained by solution of the
problem without restriction on the production volumes deteriorates
the solution of the entire problem by up to 12.5 percent. On
average, this decline is about 6 percent. The quality of our local
search procedure applied to the solution obtained by the greedy
heuristic is estimated in the column $r_2$. On average, the ratio
does not exceed 3 percent. In a case of large size, when the number of clusters  
is 250 and the number of projects in each cluster varies from 250 to 500, CPLEX failed 
to construct any feasible solution within 1 hour, but the algorithm $A$ constructed an
approximate solution within about 6 minutes. When we set the projects found by algorithm
$A$ to CPLEX for this instance, it successfully found the solution with rather small value
of gap (less than 0.1 percent) within 1 hour. In this case, the local search procedure found a 
solution that differs from the optimal one by not more than 4 percent.

\begin{table}[!hbtp]
\resizebox{0.8\textwidth}{!}
 {
 \begin{minipage}{\textwidth}
\begin{tabular}{|c|c|c|ccc|cccc|cccc|}
\hline

\multirow{2}{*}{$n$} &\multirow{2}{*}{$p_{\min}$} &\multirow{2}{*}{$p_{\max}$} &\multicolumn{3}{c|}{$CPLEX$} & \multicolumn{4}{c|}{$CPLEX_{fp}$} & \multicolumn{4}{c|}{$A$}\\

& & & \emph{obj} &\emph{ub} & \emph{gap} & \emph{obj} &\emph{ub}    & \emph{gap} & \emph{decline} (\%) &  obj   & $r_1$ & $r_2$     & time (sec.)\\

\hline
\multirow{4}{*}{10}
        &1  &10 &12851.93   &12851.93   &0  &12072  &12072  &0  &6.07   &12072  &0.94   &1  &0.006\\
        &10 &25 &16460.95   &16460.95   &0  &15502.23   &15502.23   &0  &5.82   &14710.6    &0.89   &0.95   &0.005\\
        &25 &50 &16988.53   &16988.53   &0  &14862.28   &14862.28   &0  &12.51  &14764.3    &0.87   &0.99   &0.004\\
        &50 &100    &17140.36   &17140.36   &0  &15607.92   &15607.92   &0  &8.94   &15374.1    &0.9    &0.99   &0.003\\

\hline

\multirow{4}{*}{25}
    &1  &10 &30465.26   &30465.26   &0  &29849.9    &29849.9    &0  &2.02   &29571.9    &0.97   &0.99   &0.082\\
    &10 &25 &42501.57   &42501.57   &0  &39728.94   &39728.94   &0  &6.52   &38930.6    &0.92   &0.98   &0.077\\
    &25 &50 &46508.15   &46849.94   &0.007  &43646.48   &43646.48   &0  &6.15   &42407.4    &0.9    &0.97   &0.056\\
    &50 &100    &47432.09   &47906.72   &0.01   &44307.47   &44307.47   &0  &6.59   &43324  &0.9    &0.98   &0.023\\

\hline
\multirow{4}{*}{50}
    &1  &10 &70568.99   &70568.99   &0  &69609.18   &69609.18   &0  &1.36   &66328.2    &0.94   &0.95   &0.752\\
    &10 &25 &86529.19   &86659.92   &0.002  &80616.31   &80778.51   &0.002  &6.83   &76845.5    &0.89   &0.95   &0.627\\
    &25 &50 &93928.28   &94290.32   &0.003  &88415.05   &88415.05   &0  &5.87   &86861.8    &0.92   &0.98   &0.612\\
    &50 &100    &95201.48   &95621.93   &0.004  &88532.86   &88661.56   &0.001  &7  &86380.7    &0.9    &0.97   &0.39\\

\hline
\multirow{4}{*}{100}
    &1  &10 &139928.34  &140023.11  &0.0007 &136420.24  &136586.12  &0.001  &2.51   &128679 &0.92   &0.94   &5.22\\
    &10 &25 &173898.61  &174065.77  &0.001  &163833.54  &163932.06  &0.0006 &5.79   &154452 &0.89   &0.94   &5.35\\
    &25 &50 &189722.3   &190000.79  &0.001  &177064.12  &177138.3   &0.0004 &6.67   &171431 &0.9    &0.97   &5.84\\
    &50 &100    &195223.21  &195686.39  &0.0023 &180375.66  &180476.58  &0.0005 &7.61   &176830 &0.9    &0.98   &9.44\\

\hline

    250 & 250 &500    &---  &---  &--- &515525.8  &516003.7  &0.0009 &---   &495366 &---    &0.96   &366.4\\
\hline

\end{tabular}
\medskip
\end{minipage}}
\caption{Comparison of the proposed algorithm $A$ with CPLEX}
\label{t1}
\end{table}

\section{Conclusion}
In this paper, we studied the NP-hard problem of maximizing profit
by  choosing long-term cluster development projects within the oil
and gas field, with restrictions on the total investment and maximum
annual production. We proposed a statement of the problem in the
form of Boolean linear programming and set ourselves three goals.
First, to investigate the effectiveness of application software
packages, such as CPLEX, for solving the BLP problem. Secondly,
develop a fast approximate algorithm. Thirdly, compare the
effectiveness of the CPLEX package and the approximate algorithm.

The approximate algorithm consists of two stages, which are
partially dictated by the specifics of the problem. At the first
stage, profit is maximized by selecting one project for each cluster
without taking into account the restrictions on the volume of annual
production. The distribution problem arising, in this case, is
solved by the dynamic programming algorithm with acceptable running
time. Projects selected at the first stage can be launched later
(with a delay). Therefore, at the second stage, the moments of the
start of the selected projects are determined in such a way that the
annual production volumes do not exceed the set values, and the
profit is maximum. The problem of the second stage also formulated
in the form of the BLP. It makes sense without the first stage
because, in practice, development projects for each cluster often
known, and it is only necessary to determine the moments of their
launch.

Production profiles have a characteristic shape, which is determined
by the log-normal distribution law and has the form of a graph that
first overgrows, reaches its maximum value, and then slowly
decreases \cite{Power92}. With a certain degree of assumption, we
assumed that the profiles are non-increasing. We proved that in the
case of non-increasing production profiles and for a given order of
project start (which is determined by the permutation of cluster
numbers), the greedy algorithm constructs the optimal solution. The
algorithm of permutations sorting is justified, and the greedy
algorithm used for each permutation. Iterating over all permutations
is time-consuming, and for large dimensions, it is just not
applicable, so we used a relatively simple local search algorithm.

The results of the numerical experiment on randomly generated
examples surprised us (see Table \ref{t1}). For $25\leq n\leq 100$,
the CPLEX package was not able to build an optimal solution, but it
turned out that CPLEX within one minute builds a feasible solution
quite close to the optimal one. The approximate algorithm that we
developed also builds a solution close to optimal, but CPLEX turned
out to be more efficient for such dimension. Thus, we conclude that for the considered
problem when $n\leq 100$, it is advisable to use a package of application programs
CPLEX instead of our algorithm. In a case of large size, for example when $n\geq 250$, CPLEX failed
to construct any feasible solution within 1 hour, but the algorithm $A$ constructed an
approximate solution within 6 minutes. When we set the projects found by solving the problem 
(\ref{e5})--(\ref{e6}) with $n=250$ to CPLEX, it successfully found the solution with gap less than 0.1\%
within 1 hour. 

Perhaps the situation will change if we consider some additional
restrictions.  In practice, it is necessary to produce annually at
least a given volume and no more than a predetermined quantity.
Moreover, there are restrictions on the size of annual investments.
Furthermore, annual production volumes are random variables, so the
need to take into account the probabilistic nature of the source
data can ruin the problem so that the use of CPLEX will become
inappropriate.

In future research, we plan to take into account the additional
restrictions and  specifics, as well as to develop a more efficient
approximate algorithm based on a genetic algorithm in which an
effective local search, for example, VNS \cite{Hansen01}, will be
used at the mutation stage.

\end{document}